\newtheorem{thm}{Theorem}[section]
\newtheorem{lem}[thm]{Lemma}
\newtheorem{prop}[thm]{Proposition}
\newtheorem{cor}[thm]{Corollary}
\theoremstyle{definition}
\newcommand{\la}{\lambda}
\newcommand{\spt}{\operatorname{spt}}
\newcommand{\ospt}{\operatorname{ospt}}
\newcommand{\oospt}{\overline{\operatorname{ospt}}}
\newcommand{\ST}{\operatorname{ST}}
\newcommand{\STe}{\overline{\operatorname{ST}}_{e}}
\newcommand{\STo}{\overline{\operatorname{ST}}_{o}}
\numberwithin{equation}{section}
\def\imod#1{\allowbreak\mkern5mu({\operator@font mod}\,\,#1)}
\begin{document}

\title[First rank and crank moments for overpartitions]
{The first positive rank and crank moments for overpartitions}

\author{George Andrews}

\author{Song Heng Chan}

\author{Byungchan Kim}

\author{Robert Osburn}

\address{Department of Mathematics, The Pennsylvania State University, University Park, PA 16802, USA}

\address{Division of Mathematical Sciences, School of Physical and Mathematical Sciences, Nanyang Technological University, 21 Nanyang link, Singapore 637371, Republic of Singapore}

\address{School of Liberal Arts \\ Seoul National University of Science and Technology \\ 232 Gongreung-ro, Nowon-gu, Seoul,139--743, Korea}

\address{School of Mathematical Sciences, University College Dublin, Belfield, Dublin 4, Ireland}

\email{andrews@math.psu.edu}

\email{chansh@ntu.edu.sg}

\email{bkim4@seoultech.ac.kr}

\email{robert.osburn@ucd.ie}

\subjclass[2010]{Primary: 11P81, 05A17}
\keywords{overpartitions, ranks, cranks, positive moments, positivity}
\thanks{The first author was partially supported by National Security Agency Grant H98230-12-1-0205. The second author was partially supported by the Singapore Ministry of Education Academic Research Fund, Tier 1,  project number RG68/10.
The third author was supported by Basic Science Research Program through the National Research Foundation of Korea (NRF) funded by the Ministry of Science, ICT \& Future Planning (NRF2011-0009199), and the TJ Park Science Fellowship from the POSCO TJ Park Foundation.}

\date{\today}

\begin{abstract}
In 2003, Atkin and Garvan initiated the study of rank and crank moments for ordinary partitions. These moments satisfy a strict inequality. We prove that a strict inequality also holds for the first rank and crank moments of overpartitions and consider a new combinatorial interpretation
in this setting.
\end{abstract}

\maketitle

\section{Introduction}

A partition of a non-negative integer $n$ is a non-increasing sequence of positive integers whose sum is $n$. For example, the $5$ partitions of $4$ are

\begin{equation*}
\begin{gathered}
4, 3+1, 2+2, 2+1+1, 1+1+1+1.
\end{gathered}
\end{equation*}

\noindent In 1944, Dyson introduced the rank of a partition as the largest part minus the number of parts \cite{dyson}. In 1988, the first author and Garvan defined the crank of a partition as either the largest part, if 1 does not occur as a part, or the difference between the number of parts larger than the number of $1$'s and the number of $1$'s, if $1$ does occur \cite{ag}. These two statistics give a combinatorial explanation of Ramanujan's congruences for the partition function modulo $5$, $7$ and $11$. Let $N(m,n)$ denote the number of partitions of $n$ whose rank is $m$ and $M(m,n)$ the number of partitions of $n$ whose crank is $m$.

A recent development in the theory of partitions has been the study of rank and crank moments as initiated by Atkin and Garvan \cite{atg}. For $k \geq 1$, the $k$th rank moment $N_{k}(n)$ and the $k$th crank moment $M_{k}(n)$ are given by

\begin{equation} \label{krank}
N_{k}(n):= \sum_{m \in \mathbb{Z}} m^{k} N(m,n)
\end{equation}

\noindent and

\begin{equation} \label{kcrank}
M_{k}(n):= \sum_{m \in \mathbb{Z}} m^{k} M(m,n).
\end{equation}

\noindent As $N(-m,n)=N(m,n)$ \cite{dyson} and $M(-m,n)=M(m,n)$ \cite{ag}, we have $N_{k}(n)=M_{k}(n)=0$ for $k$ odd. The even moments are of considerable interest as they have been the subject of a number of works \cite{an1, an2, br, bgm, bm, bmr, bmr2, djr, fo, gar1, gar2, r}. In particular, Garvan \cite{gar1} conjectured that

\begin{equation} \label{rcforpart}
M_{2j}(n) > N_{2j}(n)
\end{equation}

\noindent for all $j$, $n \geq 1$. In \cite{bmr}, (\ref{rcforpart}) was proved for fixed $j$ and sufficiently large $n$. Garvan proved (\ref{rcforpart}) for all $j$ and $n$ via symmetrized rank and crank moments and Bailey pairs \cite{gar2}. Recently, the first three authors gave an elementary proof of (\ref{rcforpart}) by considering modified versions of (\ref{krank}) and (\ref{kcrank}). Namely, consider the positive rank and crank moments

\begin{equation*}
{N}_{k}^{+}(n):= \sum_{m=1}^{\infty} m^{k} N(m,n)
\end{equation*}

\noindent and

\begin{equation*}
{M}_{k}^{+}(n) := \sum_{m=1}^{\infty} m^{k} M(m,n).
\end{equation*}

\noindent In \cite{ack}, it was proved that

\begin{equation} \label{oddrc}
{M}_{k}^{+}(n) > {N}_{k}^{+}(n)
\end{equation}

\noindent for all $k$, $n \geq 1$ by a careful study of the decomposition of the generating function for the difference $M_{k}^{+}(n) - N_{k}^{+}(n)$. For a discussion concerning the asymptotic behavior of these moments, see \cite{bm2}. Inequality (\ref{oddrc}) combined with the fact that $N_{2j}(n)=2N_{2j}^{+}(n)$ and $M_{2j}(n)=2M_{2j}^{+}(n)$ imply (\ref{rcforpart}).

Our interest in this paper is to consider an analogue of (\ref{oddrc}) for overpartitions. More specifically, we will investigate the first moments for overpartitions and what is counted by the difference. Recall that an overpartition \cite{lc} is a partition in which the first occurrence of each distinct number may be overlined. For example,  the $14$ overpartitions of $4$ are

\begin{equation*}
\begin{gathered}
4, \overline{4}, 3+1, \overline{3} + 1, 3 + \overline{1},
\overline{3} + \overline{1}, 2+2, \overline{2}
+ 2, 2+1+1, \overline{2} + 1 + 1, 2+ \overline{1} + 1, \\
\overline{2} + \overline{1} + 1, 1+1+1+1, \overline{1} + 1 + 1 +1.
\end{gathered}
\end{equation*}

\noindent These combinatorial objects have recently played an important role in the construction of weight 3/2 mock modular forms \cite{bl}, in Rogers-Ramanujan and Gordon type identities \cite{css} and in the study of Jack superpolynomials in supersymmetry and quantum mechanics \cite{dlm}.

Let $\overline{N}(n,m)$ denote the number of overpartitions of $n$ whose rank is $m$ and $\overline{M}(n,m)$ the number of overpartitions of $n$ whose (first residual) crank is $m$. Here, Dyson's rank extends easily to overpartitions and the first residual crank of an overpartition is obtained by taking the crank of the subpartition consisting of the non-overlined parts \cite{blo}. It is now natural to consider the rank and crank overpartition moments

\begin{equation*}
\overline{N}_{k}(n) :=\sum_{m \in \mathbb{Z}} m^{k} \overline{N}(m,n)
\end{equation*}

\noindent and

\begin{equation*}
\overline{M}_{k}(n) := \sum_{m \in \mathbb{Z}} m^{k} \overline{M}(m,n).
\end{equation*}

\noindent Via the symmetries $\overline{N}(-m,n)=\overline{N}(m,n)$ \cite{love} and $\overline{M}(-m,n)=\overline{M}(m,n)$ \cite{blo}, we have $\overline{N}_{k}(n)=\overline{M}_{k}(n)=0$ for $k$ odd. Thus, to obtain non-trivial odd moments, we consider

\begin{equation*}
\overline{N}_{k}^{+}(n) := \sum_{m=1}^{\infty} m^{k} \overline{N}(m,n)
\end{equation*}

\noindent and

\begin{equation*}
\overline{M}_{k}^{+}(n) := \sum_{m=1}^{\infty} m^{k} \overline{M}(m,n).
\end{equation*}

\noindent The main result in this paper is an analogue of (\ref{oddrc}) for overpartitions in the case $k=1$.

\begin{thm} \label{main} For all $n \geq 1$, we have
\begin{equation} \label{oddrcover}
\overline{M}_{1}^{+}(n) > \overline{N}_{1}^{+}(n).
\end{equation}
\end{thm}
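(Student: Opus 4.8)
The plan is to pass to generating functions, produce a closed form for $\sum_{n\geq 1}\bigl(\overline{M}_1^+(n)-\overline{N}_1^+(n)\bigr)q^{n}$, and then check that this closed form is a power series whose coefficient of $q^{n}$ is a positive integer for every $n\geq 1$. The crank side is the easy half. Since the first residual crank of an overpartition is the crank of its subpartition of non-overlined parts, while the overlined parts form a partition into distinct parts, the two-variable crank generating function factors as
\[
\sum_{n\geq 0}\sum_{m\in\mathbb{Z}}\overline{M}(m,n)z^{m}q^{n}=(-q;q)_\infty\cdot\frac{(q;q)_\infty}{(zq;q)_\infty (q/z;q)_\infty},
\]
so that $\overline{M}_1^+(n)=\sum_{j\geq 0}p_{\mathrm d}(j)\,M_1^+(n-j)$, where $p_{\mathrm d}(j)=[q^{j}](-q;q)_\infty$ and $M_1^+$ is the classical first positive crank moment. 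Using the known closed form $\sum_{n\geq 1}M_1^+(n)q^{n}=\frac{1}{(q;q)_\infty}\sum_{n\geq 1}\frac{(-1)^{n-1}q^{n(n+1)/2}}{1-q^{n}}$, this yields
\[
\sum_{n\geq 1}\overline{M}_1^+(n)q^{n}=\frac{(-q;q)_\infty}{(q;q)_\infty}\sum_{n\geq 1}\frac{(-1)^{n-1}q^{n(n+1)/2}}{1-q^{n}}.
\]

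The rank does not behave additively under the overlined/non-overlined split, so there I would start from the known two-variable rank generating function $\overline{R}(z,q)=\sum_{n,m}\overline{N}(m,n)z^{m}q^{n}$, most conveniently in its Lambert-series (partial-fraction) form, and extract the first positive moment. Because $\overline{N}(-m,n)=\overline{N}(m,n)$, the naive operator $z\partial_{z}|_{z=1}$ vanishes; the operator that actually returns $\overline{N}_1^+(n)=\sum_{m\geq 1}m\,[z^{m}]\,\overline{R}(z,q)$ is a constant-term extraction in $z$ applied to $z\,\partial_{z}\overline{R}(z,q)$ together with the factor $1/(z-1)$. Carried out term by term, each simple factor of the shape $(1-z)/(1-zq^{n})$ in the Lambert series acquires a double pole, and summing the residues produces a closed $q$-series of the same flavour,
\[
\sum_{n\geq 1}\overline{N}_1^+(n)q^{n}=\frac{(-q;q)_\infty}{(q;q)_\infty}\,L(q),
\]
for an explicit Lambert-type series $L(q)$.

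Subtracting the two formulas gives $\sum_{n\geq 1}\bigl(\overline{M}_1^+(n)-\overline{N}_1^+(n)\bigr)q^{n}=\frac{(-q;q)_\infty}{(q;q)_\infty}\Bigl(\sum_{n\geq 1}\frac{(-1)^{n-1}q^{n(n+1)/2}}{1-q^{n}}-L(q)\Bigr)$, and the heart of the argument is to massage this expression — keeping the factor $\frac{(-q;q)_\infty}{(q;q)_\infty}=\sum_{k\geq 0}\overline{p}(k)q^{k}$ out front — so that, after the substantial cancellation I expect, the cofactor in parentheses collapses to a power series with nonnegative coefficients and constant term $0$ (ideally a positive Lambert-type series, or a positive double sum enumerating an explicit family of objects). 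Such a representation simultaneously supplies the new combinatorial interpretation promised in the abstract: it identifies $\oospt(n):=\overline{M}_1^+(n)-\overline{N}_1^+(n)$ with the cardinality of an explicit set, split by a parity statistic into the two pieces $\STe(n)$ and $\STo(n)$. Strictness is then immediate: the cofactor has constant term $0$ and contributes the monomial $q$ with coefficient $\oospt(1)=1-0=1$, so $[q^{n}]$ of the product is at least $\overline{p}(n-1)\geq 1$. The main obstacle is thus the rank side — putting $\sum_{n}\overline{N}_1^+(n)q^{n}$ into usable closed form (the overpartition rank generating function, unlike the crank one, is not an infinite product, so the first-moment extraction is genuinely more delicate) and then exhibiting the cancellation that exposes the difference as a manifestly positive series; the crank computation and the final combinatorial translation should be comparatively routine.
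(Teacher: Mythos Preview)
Your setup is right and in fact matches the paper exactly: the crank side gives $\frac{(-q)_\infty}{(q)_\infty}h(q)$ with $h(q)=\sum_{n\ge 1}\frac{(-1)^{n-1}q^{n(n+1)/2}}{1-q^n}$, and the rank side turns out to be $\frac{(-q)_\infty}{(q)_\infty}\cdot 2h(q^2)$, so the difference is $\frac{(-q)_\infty}{(q)_\infty}\bigl(h(q)-2h(q^2)\bigr)$.

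The genuine gap is in your positivity step. You plan to keep $\frac{(-q)_\infty}{(q)_\infty}$ out front and argue that the cofactor $h(q)-2h(q^2)$ can be massaged into a series with nonnegative coefficients, then conclude strictness from $[q^1]=1$ and the convolution with $\overline p$. But the cofactor is \emph{not} nonnegative: already $h(q)-2h(q^2)=q-q^2-q^4+2q^6-q^8+q^9-\cdots$, and no amount of rewriting of the cofactor alone will remove those minus signs (they are genuine; the paper proves an exact identity $h(q)-2h(q^2)=\sum_{n\ge1}(-1)^{n+1}q^{n^2}\bigl(1-2q^n+2q^{2n}-\cdots+(-1)^n q^{n^2}\bigr)$). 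Consequently your strictness argument via $\overline p(n-1)$ collapses. What the paper actually does is use the prefactor in an essential way: after splitting $h(q)-2h(q^2)=A_1+2A_2-2A_3-A_4$, it pairs terms so that the negative contributions appear as factors $(1-q^{a})(1-q^{b})$ or $(1-q^{a})(1-q^{b})(1-q^{c})$ multiplying a monomial, and then appeals to the fact that $\frac{(-q)_\infty}{(q)_\infty}q^{N}(1-q^{a})(1-q^{b})$ has nonnegative coefficients (since $1/(q)_\infty$ alone already absorbs two such factors). Strict positivity comes from a separate piece, $\frac{(-q)_\infty}{(q)_\infty}(A_1-A_4)=\tfrac12\bigl(\frac{(-q;q^2)_\infty}{(q;q^2)_\infty}-1\bigr)$, which is positive in every positive degree. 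So the ``massage'' you anticipate cannot stay confined to the cofactor; the prefactor has to be brought in to eat the $(1-q^{a})$ factors.

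A smaller point: the combinatorial interpretation the paper obtains is not a cardinality. It is a signed count $\oospt(n)=\STo(n)-\STe(n)$, the difference between the number of odd strings and even strings over all overpartitions of $n$; it therefore does \emph{not} by itself witness positivity (indeed the paper leaves the termwise inequality $A_k(n)\ge B_k(n)$ as a conjecture). So you should not expect the combinatorial translation to double as the positivity proof.
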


%
%
%

The paper is organized as follows. In Section 2, we prove Theorem \ref{main}. In Section 3, we give a combinatorial interpretation of $\overline{M}_{1}^{+}(n) - \overline{N}_{1}^{+}(n)$. In Section 4, we conclude with some remarks regarding future directions.

\section{The proof of Theorem \ref{main}}

For $k\geq 1$, we define the generating functions

\begin{equation*}
\overline{M}_{k}(q)=\sum_{n=1}^{\infty} \overline{M}_{k}^{+}(n) q^n
\end{equation*}

\noindent and

\begin{equation*}
\overline{R}_{k}(q)=\sum_{n=1}^{\infty} \overline{N}_{k}^{+}(n) q^n
\end{equation*}

\noindent and compute their explicit expressions for $k=1$. Throughout, we use the standard $q$-hypergeometric notation,

\begin{equation*}
(a)_n = (a;q)_n = \prod_{k=1}^{n} (1-aq^{k-1}),
\end{equation*}

\noindent valid for $n \in \mathbb{N} \cup \{\infty\}$. For convenience, we define $(a ;q )_0 =1$.

\begin{prop} \label{r1m1gen} We have

\begin{equation} \label{r1}
\overline{R}_{1}(q) = \frac{2(-q)_{\infty}}{(q)_{\infty}} \sum_{n=1}^{\infty} (-1)^{n+1} \frac{q^{n(n+1)}}{1-q^{2n}}
\end{equation}

\noindent and

\begin{equation} \label{m1}
\overline{M}_{1}(q) = \frac{(-q)_{\infty}}{(q)_{\infty}} \sum_{n=1}^{\infty} (-1)^{n+1} \frac{q^{n(n+1)/2}}{1-q^n}.
\end{equation}

\end{prop}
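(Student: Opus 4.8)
The plan is to derive each generating function from the known two-variable generating functions for the overpartition rank and crank, by applying the operator that extracts the first positive moment. Recall that the rank generating function for overpartitions is
\[
\sum_{n,m} \overline{N}(m,n) z^m q^n = \frac{(-q)_\infty}{(q)_\infty} \sum_{n=1}^\infty \frac{(-1)^{n-1} q^{n(n+1)/2}(1-q^n)}{(1-zq^n)(1-z^{-1}q^n)} \cdot (\text{suitable normalization}),
\]
and the (first residual) crank generating function for overpartitions, from Bringmann--Lovejoy--Osburn \cite{blo}, is
\[
\sum_{n,m} \overline{M}(m,n) z^m q^n = \frac{(-q)_\infty}{(q)_\infty}\,\frac{(1-z)(1-z^{-1})}{\text{(appropriate theta-type factor)}}.
\]
The key observation is that for an even function $F(z,q)=\sum_m c(m) z^m$ one has $\sum_{m\ge 1} m\,c(m) = \big[\tfrac{1}{2}\,z\frac{\partial}{\partial z} F(z,q)\big]_{z=1}$, since the contributions from $m$ and $-m$ reinforce rather than cancel once one differentiates once (the naive $z\partial_z$ at $z=1$ would give $\sum_m m\,c(m)=0$, so one must instead expand $F$ near $z=1$ and read off the coefficient structure carefully, or equivalently use $\sum_{m\ge 1} m\, c(m) = \tfrac12 \lim_{z\to 1}\big(z\partial_z\big) F$ interpreted via the symmetric truncation). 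Concretely, I would write $F(z,q)$ as a sum of terms of the shape $\dfrac{A_n(q)}{(1-zq^n)(1-z^{-1}q^n)}$ or $\dfrac{(1-z)(1-1/z)}{\cdots}$, apply $z\partial_z$, and then set $z=1$; each elementary fraction contributes a geometric-type series in $q$, and summing over $n$ produces the Lambert-type series $\sum (-1)^{n+1} q^{n(n+1)}/(1-q^{2n})$ for the rank and $\sum (-1)^{n+1} q^{n(n+1)/2}/(1-q^n)$ for the crank.

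For the rank side (\ref{r1}): starting from the overpartition rank generating function, differentiate the summand $\dfrac{(1-q^n)q^{n(n+1)/2}}{(1-zq^n)(1-z^{-1}q^n)}$ with respect to $z$, multiply by $z$, and evaluate at $z=1$. A short computation shows $\big[z\partial_z \frac{1}{(1-zq^n)(1-z^{-1}q^n)}\big]_{z=1}$ vanishes by symmetry at first order, so the nonzero contribution comes from pairing the first-order behavior correctly; after the dust settles one obtains a factor $\dfrac{q^n}{(1-q^n)^2}$ times $(1-q^n)$, i.e.\ $\dfrac{q^n}{1-q^n}$, but with the $q^{n(n+1)/2}$ weight the exponents combine and the even/odd bookkeeping in the index doubles $n\mapsto 2n$ in the denominator, yielding $q^{n(n+1)}/(1-q^{2n})$ with sign $(-1)^{n+1}$ and the overall constant $2$. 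The crank side (\ref{m1}) is analogous but cleaner, since the crank generating function has the simple numerator $(1-z)(1-z^{-1})$; extracting the first positive moment there reduces to differentiating a product of the form $(1-z)(1-z^{-1}) \prod (1-zq^n)^{-1}(1-z^{-1}q^n)^{-1}$ and the Lambert series $\sum (-1)^{n+1} q^{n(n+1)/2}/(1-q^n)$ falls out directly, with the prefactor $(-q)_\infty/(q)_\infty$ surviving.

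The main obstacle I anticipate is the $z=1$ evaluation: the two-variable generating functions have genuine poles or removable singularities at $z=1$ (the rank one has $(1-zq^n)(1-z^{-1}q^n)$ in the denominator which is fine at $z=1$, but the crank numerator $(1-z)(1-z^{-1})$ vanishes there), so one must be careful about the order in which one differentiates and takes limits, and about interchanging the (infinite) sum over $n$ with differentiation. I would handle this by working with the partial-fraction/Lambert-series form where each term is manifestly regular at $z=1$ after the differentiation, justify termwise differentiation by absolute convergence for $|q|<1$, and then verify the closed forms by comparing low-order $q$-coefficients against the definitions of $\overline{N}_1^+(n)$ and $\overline{M}_1^+(n)$ computed directly from small overpartitions (e.g.\ $n=1,2,3,4$ using the list of $14$ overpartitions of $4$ given above). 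This cross-check also pins down the constant $2$ in (\ref{r1}) versus its absence in (\ref{m1}).
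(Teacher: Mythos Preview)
Your overall strategy---start from the two-variable generating functions, differentiate, extract the positive moment---is the same as the paper's. But the execution has a real gap: your proposed formula $\sum_{m\ge 1} m\,c(m) = \tfrac12\lim_{z\to 1}(z\partial_z)F$ is literally zero for any even $F$, so ``interpreted via the symmetric truncation'' is not a method. Differentiating the closed summand $\dfrac{1}{(1-zq^n)(1-z^{-1}q^n)}$ and then setting $z=1$ will always return zero, and no amount of ``pairing the first-order behavior correctly'' rescues this without an additional idea.

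What the paper actually does is separate positive and negative powers of $z$ \emph{before} differentiating. Using the identity
\[
\frac{(1-z)(1-1/z)\,q^{n}}{(1-zq^n)(1-q^n/z)} \;=\; 1 - \frac{1-q^n}{1+q^n}\Biggl(\sum_{m\ge 0} z^m q^{mn} + \sum_{m\ge 1} z^{-m} q^{mn}\Biggr),
\]
one obtains an explicit split of $\overline{R}(z,q)$ into a $z^{m\ge 0}$ piece and a $z^{m\le -1}$ piece; applying $z\partial_z$ and keeping only the positive-power piece, then setting $z=1$ and summing $\sum_{m\ge 1} m q^{mn} = q^n/(1-q^n)^2$, gives the result directly. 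Two details in your sketch are off: the Lambert series for $\overline{R}(z,q)$ has exponent $q^{n^2+n}$, not $q^{n(n+1)/2}$, and the denominator $1-q^{2n}$ arises as $(1-q^n)(1+q^n)$ with the $(1+q^n)$ already present in the identity above, not from any ``index doubling $n\mapsto 2n$''. For $\overline{M}_1(q)$, the paper simply observes $\overline{C}(z,q)=(-q)_\infty\,C(z,q)$ and invokes the partition-crank computation from \cite{ack}; your description of a pole/removable-singularity obstruction at $z=1$ for the crank side is a non-issue once you work with the partition crank result. Numerical verification of low-order coefficients is fine as a sanity check but does not substitute for pinning down the constant $2$, which here comes from the explicit factor $2$ in the Lambert series for $\overline{R}(z,q)$.
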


\begin{proof}
We begin with the generalized Lambert series representation of the two-variable generating function for Dyson's rank for overpartitions,

\begin{equation} \label{rzq}
\begin{aligned}
\overline{R}(z,q) & := \sum_{n=0}^{\infty} \sum_{m \in \mathbb{Z}} \overline{N}(m,n) z^m q^n \\
& = \sum_{n=0}^{\infty} \frac{(-1)_{n} q^{n(n+1)/2}}{(zq)_{n} (q/z)_{n}} \\
& = \frac{(-q)_{\infty}}{(q)_{\infty}} \Biggl ( 1 + 2 \sum_{n=1}^{\infty} \frac{(1-z) (1-1/z) (-1)^n q^{n^2 + n}}{(1-zq^n)(1-q^n/z)} \Biggr) \\
& = \frac{(-q)_{\infty}}{(q)_{\infty}} \Biggl ( 1 + 2\sum_{n=1}^{\infty} (-1)^n q^{n^2} - 2  \sum_{n=1}^{\infty} \frac{(-1)^n q^{n^2} (1-q^n)}{1+q^n} \Biggl( \sum_{m=0}^{\infty} z^m q^{mn} + \sum_{m=1}^{\infty} z^{-m} q^{mn} \Biggr) \Biggr).
\end{aligned}
\end{equation}

\noindent For the second and third equalities in (\ref{rzq}), see the proof of Proposition 3.2 in \cite{love}. Here, we have used the identity

\begin{equation*}
\frac{(1-z)(1-1/z) q^{n}}{(1-zq^n)(1-q^n/z)} = 1 - \frac{1-q^n}{1+q^n} \Biggl( \sum_{m=0}^{\infty} z^m q^{mn} + \sum_{m=1}^{\infty} z^{-m} q^{mn} \Biggr)
\end{equation*}

\noindent for the last equality in (\ref{rzq}). We now apply the differential operator $z\frac{\partial}{\partial z}$ to both sides of (\ref{rzq}) to obtain

\begin{equation} \label{operated}
\begin{aligned}
z & \frac{\partial}{\partial z} \Bigl( \overline{R}(z,q) \Bigr) \\
& = \frac{(-q)_{\infty}}{(q)_{\infty}} \Biggl( 2 \sum_{n=1}^{\infty} \frac{(-1)^{n+1} q^{n^2} (1-q^n)}{1+q^n} \sum_{m=1}^{\infty} m z^m q^{mn} + 2 \sum_{n=1}^{\infty} \frac{(-1)^n q^{n^2} (1-q^n)}{1+q^n} \sum_{m=1}^{\infty} m z^{-m} q^{mn} \Biggr).
\end{aligned}
\end{equation}

\noindent Only the first term on the right side of (\ref{operated}) contributes to positive powers of $z$ and so

\begin{equation} \label{first}
\begin{aligned}
\overline{R}_{1}(q) & = \lim_{z \to 1} \frac{2(-q)_{\infty}}{(q)_{\infty}} \sum_{n=1}^{\infty} \frac{(-1)^{n+1} q^{n^2} (1-q^n)}{1+q^n} \sum_{m=1}^{\infty} m z^m q^{mn} \\
& = \frac{2(-q)_{\infty}}{(q)_{\infty}} \sum_{n=1}^{\infty} \frac{(-1)^{n+1} q^{n^2} (1-q^n)}{1+q^n} \sum_{m=1}^{\infty} m q^{mn} \\
& = \frac{2(-q)_{\infty}}{(q)_{\infty}} \sum_{n=1}^{\infty} \frac{(-1)^{n+1} q^{n(n+1)}}{1-q^{2n}},
\end{aligned}
\end{equation}

\noindent which is (\ref{r1}). In the last equality of (\ref{first}), we applied the identity

\begin{equation*}
\sum_{m=1}^{\infty} m q^{mn} = \frac{q^{n}}{(1-q^n)^2}.
\end{equation*}

\noindent For the two-variable generating function for the first residual crank for overpartitions \cite{blo}, we have

\begin{equation}
\overline{C}(z,q):= \sum_{n=0}^{\infty} \sum_{m \in \mathbb{Z}} \overline{M}(m,n) z^m q^n = (-q)_{\infty} C(z,q)
\end{equation}

\noindent where $C(z,q)$ is the two-variable generating function for the crank for partitions. Thus, by the proof of Theorem 1 in \cite{ack}, we obtain (\ref{m1}).

\end{proof}

We now require the following two lemmas for the proof of Theorem \ref{main}.

\begin{lem}\label{la} If

\[
h(q) := \sum_{n=1}^{\infty} \frac{(-1)^{n+1} q^{n(n+1)/2}}{1-q^n},
\]

\noindent then

\[
h(q)=\sum_{j=1}^{\infty} q^{j^2} (1+2q^{j} + 2 q^{2j}
+ \cdots + 2 q^{j^2 -j} + q^{j^2} ).
\]
\end{lem}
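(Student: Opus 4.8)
The plan is to bring both sides to the common form
\[
\sum_{d\ge1}\sum_{\ell\ge1}\operatorname{sgn}(2\ell-d)\,q^{d\ell},
\]
where $\operatorname{sgn}(0):=0$; this is a well-defined power series since only finitely many pairs $(d,\ell)$ satisfy $d\ell=N$ for a given $N$. On the $h(q)$ side the sum will come out restricted to \emph{odd} $d$, and the key point is that the complementary ``$d$ even'' part of the full sum vanishes by an antisymmetry.

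First I would expand $1/(1-q^n)=\sum_{k\ge0}q^{kn}$ to get $h(q)=\sum_{n\ge1}\sum_{k\ge0}(-1)^{n+1}q^{n(n+2k+1)/2}$, then substitute $e=n+2k+1$, which turns the index $k\ge0$ into a parameter $e>n$ with $e\not\equiv n\pmod2$ and makes the exponent $ne/2$. Since exactly one of $n,e$ is even, I would split into the two cases, writing the even member as $2\ell$: the case $n=2\ell$ contributes $-q^{\ell e}$ with $e$ odd and $e>2\ell$, and the case $e=2\ell$ contributes $+q^{n\ell}$ with $n$ odd and $n<2\ell$. Calling the odd member $d$ in both cases gives $h(q)=\sum_{\ell\ge1}\sum_{d\ \mathrm{odd}}\operatorname{sgn}(2\ell-d)\,q^{d\ell}$.

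Next I would rewrite the right-hand side. Using $1+2q^j+2q^{2j}+\cdots+2q^{j^2-j}+q^{j^2}=\sum_{i=0}^{j-1}q^{ij}+\sum_{i=1}^{j}q^{ij}=\dfrac{(1+q^j)(1-q^{j^2})}{1-q^j}$, the right-hand side equals $\sum_{j\ge1}\dfrac{q^{j^2}(1+q^j)}{1-q^j}-\sum_{j\ge1}\dfrac{q^{2j^2}(1+q^j)}{1-q^j}$. Expanding $\dfrac{q^{j^2}(1+q^j)}{1-q^j}=\sum_{\ell\ge j}q^{j\ell}+\sum_{\ell\ge j+1}q^{j\ell}$ and summing over $j$ recovers the classical identity $\sum_{j\ge1}\dfrac{q^{j^2}(1+q^j)}{1-q^j}=\sum_{d,\ell\ge1}q^{d\ell}$ (each side equals $2\sum_{1\le d\le\ell}q^{d\ell}-\sum_{d\ge1}q^{d^2}$), and similarly $\dfrac{q^{2j^2}(1+q^j)}{1-q^j}=\sum_{\ell\ge2j}q^{j\ell}+\sum_{\ell\ge2j+1}q^{j\ell}$. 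Subtracting and collecting, for each fixed first index $d$, the block $\sum_{\ell=1}^{2d-1}q^{d\ell}-\sum_{\ell\ge2d+1}q^{d\ell}=\sum_{\ell\ge1}\operatorname{sgn}(2d-\ell)\,q^{d\ell}$ shows the right-hand side equals $\sum_{d,\ell\ge1}\operatorname{sgn}(2d-\ell)\,q^{d\ell}$, which by the relabeling $d\leftrightarrow\ell$ is $\sum_{d,\ell\ge1}\operatorname{sgn}(2\ell-d)\,q^{d\ell}$.

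Finally, the difference between this last sum and $h(q)$ is exactly its ``$d$ even'' part; writing $d=2c$ it becomes $\sum_{c,\ell\ge1}\operatorname{sgn}(\ell-c)\,q^{2c\ell}$, which equals its own negative under $c\leftrightarrow\ell$ and hence is $0$. Therefore $h(q)$ equals the right-hand side. I expect the only delicate point to be the parity and sign bookkeeping in the first step — keeping track of $(-1)^{n+1}$ and of $e\not\equiv n$ through the substitution $e=n+2k+1$ and the even/odd split — together with the realization that the finite-sum right-hand side should be put into the Lambert-series form $\sum_{d,\ell}\operatorname{sgn}(2d-\ell)q^{d\ell}$; once both sides are in that form, the conclusion is immediate.
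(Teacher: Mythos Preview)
Your proof is correct. Both sides are brought to the common ``signed Lambert'' form $\sum_{d,\ell\ge1}\operatorname{sgn}(2\ell-d)\,q^{d\ell}$, with $h(q)$ accounting for the odd-$d$ part and the even-$d$ part vanishing by the antisymmetry under $c\leftrightarrow\ell$. The bookkeeping in the substitution $e=n+2k+1$ and the parity split is clean and accurate.

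Your route is genuinely different from the paper's. The paper works entirely on the $h(q)$ side: it splits the defining sum by the parity of $n$ and then applies two Lambert-series identities, namely $\sum_{j}\frac{q^j}{1-q^j}=\sum_j\frac{q^{j^2}(1+q^j)}{1-q^j}$ and $\sum_j\frac{q^{2j-1}}{1-q^{2j-1}}=\sum_j\frac{q^{j(j+1)/2}}{1-q^j}$, to transform $h(q)$ step by step into the closed form $\sum_j\frac{q^{j^2}(1+q^j)(1-q^{j^2})}{1-q^j}$, which then factors as the stated finite sum. You share the first of these identities (it is exactly your computation of $\sum_j\frac{q^{j^2}(1+q^j)}{1-q^j}$), but you never need the second; instead you organize both sides around the $\operatorname{sgn}(2\ell-d)$ weight and finish with a one-line symmetry argument. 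Your approach is a bit more conceptual and self-contained, while the paper's approach stays closer to classical Lambert-series manipulations and yields two reusable auxiliary identities along the way.
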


\begin{proof}
We first note that 
\begin{equation} \label{laeq1}
\sum_{j=1}^\infty \frac{q^j}{1-q^j} = \sum_{j=1}^\infty\sum_{k=1}^\infty q^{jk}
= \sum_{j=1}^\infty \sum_{k\geq j}^\infty q^{jk} + \sum_{k=1}^\infty \sum_{j>k}^\infty q^{jk}
= \sum_{j=1}^\infty \frac{q^{j^2}(1+q^j)}{1-q^j}.
\end{equation}
By employing a similar argument, we can also derive that
\begin{equation}\label{laeq2}
\sum_{j=1}^{\infty} \frac{q^{2j-1}}{1-q^{2j-1}} = \sum_{j=1}^{\infty} \frac{q^{j(j+1)/2}}{1-q^j}.
\end{equation}

By expanding the summation according to the parity of $n$, we find that
\begin{align*}
h(q) 
&=\sum_{n=1}^{\infty} \frac{q^{n(n+1)/2}}{1-q^{n}} -2 \sum_{n=1}^{\infty} \frac{q^{n(2n+1)}}{1-q^{2n}} \\
&=\sum_{n=1}^{\infty} \frac{q^{n(n+1)/2}}{1-q^{n}} +  \sum_{n=1}^{\infty} \frac{q^{2n^2}(1+q^{2n})}{1-q^{2n}} -  \left(  \sum_{n=1}^{\infty} \frac{q^{2n^2}(1+q^{2n})}{1-q^{2n}} +2\sum_{n=1}^{\infty} \frac{q^{n(2n+1)}}{1-q^{2n}} \right)\\
&=\sum_{n=1}^{\infty} \frac{q^{2n-1}}{1-q^{2n-1}} + \sum_{n=1}^{\infty} \frac{q^{2n}}{1-q^{2n}} - \sum_{n=1}^{\infty} \frac{q^{2n^2}(1+q^n)^2}{1-q^{2n}} \quad \text{by \eqref{laeq1} and \eqref{laeq2}} \\
&=\sum_{n=1}^{\infty} \frac{q^n}{1-q^n} - \sum_{n=1}^{\infty} \frac{q^{2n^2}(1+q^n)}{1-q^{n}} \\
&=\sum_{j=1}^\infty \frac{q^{j^2}(1+q^j)}{1-q^j}-\sum_{j=1}^\infty \frac{q^{2j^2} (1+q^{j})}{(1-q^j)} \quad \text{by \eqref{laeq1}} \\
&=\sum_{j=1}^\infty \frac{q^{j^2}(1+q^j)(1-q^{j^2})}{1-q^j}\\
&=\sum_{j=1}^\infty q^{j^2}(1+q^j)(1+q^j+\cdots + q^{j(j-1)})\\
&=\sum_{j=1}^\infty q^{j^2}(1+2q^j+2q^{2j}+\cdots + 2q^{j^2-j}+q^{j^2}).
\end{align*}

\end{proof}

\begin{lem}\label{l1}
\begin{equation} \label{e1}
h(q)-2h(q^2)
=\sum_{n=1}^\infty (-1)^{n+1}q^{n^2} \left( 1-2q^n + 2q^{2n} - \cdots + (-1)^{n-1} 2q^{n^2-n} + (-1)^nq^{n^2} \right).
\end{equation}
\end{lem}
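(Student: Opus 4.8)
The plan is to compute the coefficient of $q^{N}$ on each side of \eqref{e1} and to reduce the lemma to an elementary identity about the divisors of $N$; the input is Lemma \ref{la}. Write $S(q)$ for the right-hand side of \eqref{e1}, and put $w(m,N):=2-[m^{2}=N]-[2m^{2}=N]$ (Iverson-bracket notation).

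First I would rewrite Lemma \ref{la} in divisor form. Since inside $q^{j^{2}}(1+2q^{j}+\cdots+2q^{j^{2}-j}+q^{j^{2}})$ the term $q^{j^{2}+kj}$ equals $q^{j\ell}$ with $\ell=j+k$ and carries coefficient $2-[\ell=j]-[\ell=2j]$, Lemma \ref{la} says $h(q)=\sum_{j\ge1}\sum_{\ell=j}^{2j}(2-[\ell=j]-[\ell=2j])q^{j\ell}$, so that, taking $\ell=N/j$,
\[
[q^{N}]\,h(q)=\sum_{\substack{j\mid N\\ \sqrt{N/2}\le j\le\sqrt N}}w(j,N),\qquad
[q^{N}]\bigl(h(q)-2h(q^{2})\bigr)=[q^{N}]h(q)-2[q^{N/2}]h(q),
\]
with the convention $[q^{N/2}]h(q)=0$ when $N$ is odd. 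The identical computation applied to $S(q)$ — expand $q^{n^{2}}(1-2q^{n}+\cdots+(-1)^{n}q^{n^{2}})=\sum_{\ell=n}^{2n}(-1)^{\ell-n}(2-[\ell=n]-[\ell=2n])q^{n\ell}$, read off $\ell=N/n$, and absorb the outer factor $(-1)^{n+1}$ — gives
\[
[q^{N}]\,S(q)=\sum_{\substack{n\mid N\\ \sqrt{N/2}\le n\le\sqrt N}}(-1)^{N/n+1}\,w(n,N).
\]

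Second, the two divisor sums just obtained run over the same set of divisors of $N$ with the same weights $w(n,N)$; since $(-1)^{N/n+1}-1$ is $0$ for $N/n$ odd and $-2$ for $N/n$ even, it follows that
\[
[q^{N}]S(q)-[q^{N}]h(q)=-2\!\!\sum_{\substack{n\mid N,\ \sqrt{N/2}\le n\le\sqrt N\\ N/n\ \mathrm{even}}}\!\!w(n,N).
\]
Comparing with the expression for $[q^{N}](h(q)-2h(q^{2}))$ above, the lemma becomes equivalent to the single identity
\[
\sum_{\substack{n\mid N,\ \sqrt{N/2}\le n\le\sqrt N\\ N/n\ \mathrm{even}}}w(n,N)=[q^{N/2}]\,h(q).
\]

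Finally, I would prove this identity by means of the substitution $n\longleftrightarrow j:=N/(2n)$. The condition ``$N/n$ even'' is precisely ``$2n\mid N$'', so $j$ is a genuine positive divisor of $N/2$ as $n$ ranges over the left-hand index set, and a direct calculation shows that $n\mapsto N/(2n)$ is a bijection of $\{\,n:\ n\mid N/2,\ \sqrt{N/2}\le n\le\sqrt N\,\}$ onto $\{\,j:\ j\mid N/2,\ \sqrt{(N/2)/2}\le j\le\sqrt{N/2}\,\}$, the two pairs of range inequalities transforming into one another. Moreover $n^{2}=N\iff2j^{2}=N/2$ and $2n^{2}=N\iff j^{2}=N/2$, so that $w(n,N)=2-[j^{2}=N/2]-[2j^{2}=N/2]=w(j,N/2)$. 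Hence the left-hand side becomes exactly the divisor-sum formula for $[q^{N/2}]h(q)$ from the first step (both sides being the empty sum $0$ when $N$ is odd), which proves the identity and with it \eqref{e1}. I expect the main obstacle to be purely the bookkeeping — tracking the two ``corner'' indicators $[n^{2}=N]$ and $[2n^{2}=N]$ under the substitution and checking the translation of the range constraints — rather than anything conceptual.
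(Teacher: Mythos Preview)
Your proof is correct, and it takes a genuinely different route from the paper's. The paper argues directly at the level of $q$-series: it expands the right-hand side of \eqref{e1} according to the parity of $n$, compares with the expression for $h(q)$ from Lemma~\ref{la}, and reduces the claim to an explicit identity for $h(q^{2})$ (their equation \eqref{e2}); that identity is then verified by writing both sides as double sums in $f(n,j)=q^{2n^{2}+2nj}$ and reindexing. Your argument instead extracts the coefficient of $q^{N}$ on each side as a weighted sum over divisors of $N$ in the window $[\sqrt{N/2},\sqrt N]$, reduces the lemma to a single divisor-sum identity, and dispatches that identity with the involution $n\mapsto N/(2n)$. The paper's approach keeps everything inside generating-function manipulations and so dovetails with the later analysis of $A_{1},\ldots,A_{4}$; your approach is shorter, more arithmetic in flavor, and makes transparent why the factor of $2$ in $2h(q^{2})$ is exactly what is needed---it is the ``$N/n$ even'' contribution isolated by the sign $(-1)^{N/n+1}$. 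Both proofs rest on Lemma~\ref{la} in the same essential way.
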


\begin{proof}
Expanding the right side of \eqref{e1} according to the parity of
$n$ and then separating the positive terms from the negative terms, we find that
\begin{equation} \label{right}
\begin{aligned}
&\sum_{n=1}^\infty (-1)^{n+1}q^{n^2} \left( 1-2q^n + 2q^{2n}+ \cdots + (-1)^{n-1} 2q^{n^2-n} + (-1)^nq^{n^2} \right)\\
&=\sum_{n=1}^\infty q^{(2n-1)^2} \left( 1 + 2q^{4n-2}+2q^{8n-4}+ \cdots +  2q^{4n^2-6n+2} \right)\\
&\quad-\sum_{n=1}^\infty q^{(2n-1)^2}\left( 2q^{2n-1} + 2q^{6n-3}+ \cdots +  2q^{4n^2-8n+3} + q^{(2n-1)^2} \right) \\
&\quad+\sum_{n=1}^\infty q^{(2n)^2} \left( 2q^{2n} + 2q^{6n}+ \cdots + 2q^{4n^2-2n}  \right)\\
&\quad-\sum_{n=1}^\infty q^{(2n)^2} \left( 1+ 2q^{4n}+ \cdots + 2q^{4n^2-4n} + q^{(2n)^2} \right).
\end{aligned}
\end{equation}

Using Lemma \ref{la}, we compute a similar expansion for $h(q)$, then compare with (\ref{right}) in order to see that it suffice to prove

\begin{align} \nonumber
h(q^2)
&=\sum_{n=1}^\infty q^{2n^2} \left( 1+2q^{2n} + 2q^{4n}+ \cdots + 2q^{2n^2-2n} + q^{2n^2} \right)\\ \nonumber
&=\sum_{n=1}^\infty q^{(2n-1)^2}\left( 2q^{2n-1} + 2q^{6n-3}+ \cdots +  2q^{4n^2-8n+3} + q^{(2n-1)^2} \right) \\
&\quad+\sum_{n=1}^\infty q^{(2n)^2} \left( 1+ 2q^{4n}+ \cdots + 2q^{4n^2-4n} + q^{(2n)^2} \right). \label{e2}
\end{align}
Subtracting $\displaystyle \sum_{n=1}^\infty q^{2n^2} \left( 1 + q^{2n^2}
\right)$ from both sides of \eqref{e2} and then dividing by 2, it
remains to show that
\begin{align} \nonumber
&\sum_{n=2}^\infty q^{2n^2} \left( q^{2n} + q^{4n}+ \cdots + q^{2n^2-2n} \right)\\
&=\sum_{n=2}^\infty q^{(2n-1)^2}\left( q^{2n-1} + q^{6n-3}+ \cdots +  q^{4n^2-8n+3} \right) +\sum_{n=2}^\infty q^{4n^2} \left( q^{4n}+ \cdots + q^{4n^2-4n} \right).
\label{e3}
\end{align}

Define $f(n,j)=q^{2n^2+2nj}$. Substituting $f(n, j)$ into the left
side of \eqref{e3} and making a change of summation index $k=n+j$,
we find that
\begin{align*}
&\sum_{n=2}^\infty q^{2n^2} \left( q^{2n} + q^{4n}+ \cdots + q^{2n^2-2n} \right)
=\sum_{n=2}^\infty \sum_{j=1}^{n-1} f(n,j) =\sum_{n=2}^\infty\sum_{k=n+1}^{2n-1} f(n, k-n)\\
&=\sum_{l=2}^\infty \sum_{n=l}^{2l-2} f(n,2l-1-n) +\sum_{m=2}^\infty\sum_{n=m+1}^{2m-1} f(n, 2m-n)\\
&=\sum_{l=2}^\infty \left(q^{4l^2-2l} + q^{4l^2+2l-2} + \cdots +
q^{8l^2-12l + 4} \right)
+\sum_{l=2}^\infty \left( q^{4l^2+4l} + q^{4l^2+8l} + \cdots + q^{8l^2-4l} \right),
\end{align*}
where in the penultimate equality, we rearranged the order of
summation and separated the terms into odd and even values of $k$
via $k=2l-1$ and $k=2m$. We see that these are equal to the right
side of \eqref{e3} and this completes the proof.

\end{proof}

We can now prove Theorem \ref{main}

\begin{proof}[Proof of Theorem \ref{main}.]

By Proposition \ref{r1m1gen}, we have

\begin{equation} \label{rmh}
\overline{M}_{1}(q) - \overline{R}_{1}(q) = \frac{(-q)_{\infty}}{(q)_{\infty}} \Bigl ( h(q) - 2 h(q^2) \Bigr).
\end{equation}

\noindent Thus, it suffices to prove that the right side of (\ref{rmh}) has positive power series coefficients for all positive powers of $q$. By Lemma \ref{l1},
\begin{align*} \nonumber
h(q)-2h(q^2) & = \sum_{n=1}^\infty (-1)^{n+1}q^{n^2}
 +
 2\sum_{n=2}^\infty (-1)^{n}q^{n^2} \left(q^n - q^{2n}+ \cdots + (-1)^{n-2} q^{n^2-n}\right)\\
&\quad -2\sum_{n=1}^\infty q^{2(2n)^2}
 -\sum_{n=1}^\infty (-1)^{n+1}q^{2n^2}\\ \nonumber
&=:A_1 + 2A_2 - 2A_3- A_4.
\end{align*}

For the sum $A_1$, note that
\[
-\frac{1}{2}+ A_1 = -\frac{1}{2} \sum_{n=-\infty}^\infty (-1)^nq^{n^2} = -\frac{(q)_\infty}{2(-q)_\infty}.
\]
Hence
\[
\frac{(-q)_\infty}{(q)_\infty} A_1 = \frac{(-q)_\infty}{2(q)_\infty}-\frac{1}{2}.
\]
Similarly, for the sum $A_4$,
\[
\frac{(-q^2;q^2)_\infty}{(q^2;q^2)_\infty} A_4 = \frac{(-q^2;q^2)_\infty}{2(q^2;q^2)_\infty}-\frac{1}{2}.
\]
Therefore,
\begin{align*}
\frac{(-q)_\infty}{(q)_\infty}(A_1-A_4) = \frac{(-q)_\infty}{2(q)_\infty}-\frac{1}{2}
-\frac{(-q;q^2)_\infty}{(q;q^2)_\infty}\left(\frac{(-q^2;q^2)_\infty}{2(q^2;q^2)_\infty}-\frac{1}{2}\right)
=\frac{(-q;q^2)_\infty}{2(q;q^2)_\infty}-\frac{1}{2},
\end{align*}
which has positive power series coefficients for all positive powers of $q$. Next, we examine $A_2-A_3$. We define $g(n,j) = (-1)^{n+j-1}q^{n^2+jn}$. Then
\begin{align*}
A_2 - A_3 &=\sum_{n=2}^\infty (-1)^{n}q^{n^2}\sum_{j=1}^{n-1}(-1)^{j-1} q^{jn} -\sum_{n=1}^\infty q^{2(2n)^2}\\
&= \sum_{n=2}^\infty\sum_{j=1}^{n-1} g(n,j) +\sum_{n=1}^\infty g(2n,2n).\\
\end{align*}

\noindent We now rearrange the series $A_2-A_3$ into several sums. Note that for $j \geq 0$ and $n\geq 2j+2$,
\begin{align*}
&g(2n,4j+3) + g(2n+1,4j+3) + g(2n+1,4j+1) + g(2n+2,4j+1) \\
&=  (-1)^{2n+4j+2}q^{4n^2+(4j+3)2n}\left(1-q^{4n+4j+4}-q^{4j+2}+q^{4n+8j+6}\right)\\
&=  q^{4n^2+(4j+3)2n} \left(1-q^{4j+2}\right)  \left(1-q^{4n+4j+4}\right),
\end{align*}
and for $j\geq 0$ and $n \geq 2j+2$,
\begin{align*}
&g(2n+1,4j+4) + g(2n+2,4j+4) + g(2n+2,4j+2) + g(2n+3,4j+2) \\
&=  (-1)^{2n+4j+4}q^{(2n+1)^2+(4j+4)(2n+1)}\left(1-q^{4n+4j+7}-q^{4j+3}+q^{4n+8j+10}\right)\\
&=  q^{(2n+1)^2+(4j+4)(2n+1)}\left(1-q^{4j+3}\right) \left(1-q^{4n+4j+7}\right).
\end{align*}
These take care of all the terms except, for all integers $n\geq 0$,
\begin{align*}
&g(4n+2,4n+1) + g(4n+3,4n+1) + g(4n+4,4n+1) + g(4n+2,4n+2)\\
&+
g(4n+3,4n+2) + g(4n+4,4n+2) + g(4n+5,4n+2) + g(4n+4,4n+4)\\
&=
[g(4n+2,4n+1) + g(4n+3,4n+1) + g(4n+4,4n+1) + g(4n+2,4n+2) \\
&\quad+ g(4n+3,4n+2)  - g(4n+3,4n+4) ]\\
&\quad+\left[g(4n+3, 4n+4)+ g(4n+4, 4n+4)+g(4n+4, 4n+2)+g(4n+5, 4n+2)\right].
\end{align*}
Note that
\begin{align*}
& g(4n+2,4n+1) + g(4n+3,4n+1) + g(4n+4,4n+1) + g(4n+2,4n+2) \\
& +g(4n+3,4n+2)  - g(4n+3,4n+4) \\
&=q^{(4n+2)^2+(4n+1)(4n+2)}
\left[1-q^{12n+6}+q^{24n+14}-q^{4n+2}+q^{16n+9}-q^{24n+15}\right]\\
&=q^{(4n+2)^2+(4n+1)(4n+2)}
\Big[(1-q^{4n+2})(1-q^{8n+7})(1-q^{12n+6})\\
&\quad +q^{12n+8}(1-q)(1-q^{4n})
+q^{8n+7}(1-q^{4n+1})(1-q^{12n+6})\Big]
\end{align*}
while
\begin{align*}
&g(4n+3, 4n+4)+ g(4n+4, 4n+4)+g(4n+4, 4n+2)+g(4n+5, 4n+2)\\
&=q^{(4n+2)^2+(4n+1)(4n+2)+24n+15}
(1-q^{4n+3})(1-q^{12n+11}).
\end{align*}
These sums show that
\begin{align*}
&\frac{(-q)_\infty}{(q)_\infty}(A_2-A_3)
\\
&=\frac{(-q)_\infty}{(q)_\infty}
\sum_{j= 0}^\infty \sum_{n=2j+2}^\infty q^{4n^2+(4j+3)2n}
\left(1-q^{4j+2}\right) \left(1-q^{4n+4j+4}\right)
\\
&\quad +\frac{(-q)_\infty}{(q)_\infty}
\sum_{j= 0}^\infty \sum_{n=2j+2}^\infty
q^{(2n+1)^2+(4j+4)(2n+1)}\left(1-q^{4j+3}\right) \left(1-q^{4n+4j+7}\right)\\
&\quad +\frac{(-q)_\infty}{(q)_\infty}
\sum_{n=0}^\infty
q^{(4n+2)^2+(4n+1)(4n+2)}
\Big[(1-q^{4n+2})(1-q^{8n+7})(1-q^{12n+6})\\
&\quad +q^{12n+8}(1-q)(1-q^{4n})
+q^{8n+7}(1-q^{4n+1})(1-q^{12n+6})\Big]\\
&\quad +\frac{(-q)_\infty}{(q)_\infty}
\sum_{n=0}^\infty
q^{(4n+2)^2+(4n+1)(4n+2)+24n+15}
(1-q^{4n+3})(1-q^{12n+11}).
\end{align*}

For positive integers $a$, $b$, $c$ and $d$ with $b < c < d$, expressions of the form
\[
\frac{(-q)_\infty}{(q)_\infty}q^a(1-q^b)(1-q^c)
\]
and
\[
\frac{(-q)_\infty}{(q)_\infty}q^a(1-q^b)(1-q^c)(1-q^d)
\]
have nonnegative coefficients and so
$\frac{(-q)_\infty}{(q)_\infty}(A_2-A_3)$ has nonnegative power series coefficients. Since $\frac{(-q)_\infty}{(q)_\infty}(A_1-A_4)$ has positive power series coefficients for all
positive powers of $q$, we conclude that the power series
expansion of $\frac{(-q)_\infty}{(q)_\infty}(h(q)-2h(q^2))$ has
positive coefficients for all $q^n$, $n\geq 1$. This proves (\ref{oddrcover}).
\end{proof}

\begin{cor} \label{2case}
\[
\frac{1}{(q)_\infty}(h(q)-2h(q^2))
\]
has positive power series coefficients for all $q^n$ with $n\geq 6$.
\end{cor}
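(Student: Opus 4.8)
The plan is to re-examine the decomposition of $h(q)-2h(q^2)$ constructed in the proof of Theorem~\ref{main}, dividing this time by $(q)_\infty$ rather than by $(q)_\infty/(-q)_\infty$. Recall from that proof the splitting $h(q)-2h(q^2)=(A_1-A_4)+2(A_2-A_3)$, the identity
\[
\frac{(-q)_\infty}{(q)_\infty}(A_1-A_4)=\frac{(-q;q^2)_\infty}{2(q;q^2)_\infty}-\frac12 ,
\]
and, crucially, the fact that $A_2-A_3$ was rewritten there \emph{as a formal power series}, before any infinite product entered, as a locally finite sum of terms $q^a(1-q^b)(1-q^c)$ with $b<c$ and $q^a(1-q^b)(1-q^c)(1-q^d)$ with $b<c<d$. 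Since $(-q)_\infty=1/(q;q^2)_\infty$ by Euler, dividing the displayed identity by $(-q)_\infty$ gives $\frac1{(q)_\infty}(A_1-A_4)=\tfrac12\bigl((-q;q^2)_\infty-(q;q^2)_\infty\bigr)$, which is the generating function for partitions into distinct odd parts using an odd number of parts; hence its coefficient of $q^n$ is at least $1$ when $n$ is odd (the single part $n$) and is $0$ when $n$ is even.

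For $\frac1{(q)_\infty}(A_2-A_3)$ I would multiply the power-series expression for $A_2-A_3$ by $1/(q)_\infty$ and use that $\frac{(1-q^b)(1-q^c)}{(q)_\infty}=\prod_{k\ne b,c}\frac1{1-q^k}$ when $b\ne c$ (and the analogous identity with three distinct exponents), concluding that $\frac1{(q)_\infty}(A_2-A_3)$ is a sum of series each with nonnegative coefficients, and that it vanishes for $n\le 5$ because the lowest-degree term of $A_2-A_3$ is $q^6$. It then suffices to single out one summand that is \emph{strictly} positive at every $n\ge 6$: the $n=0$ case of the ``exceptional'' grouping in the proof of Theorem~\ref{main} contributes, among other pieces, $q^6(1-q^2)(1-q^6)(1-q^7)$, and because $2,6,7$ are distinct,
\[
\frac{q^6(1-q^2)(1-q^6)(1-q^7)}{(q)_\infty}=\frac{q^6}{\prod_{k\ne 2,6,7}(1-q^k)}
\]
has coefficient of $q^n$ equal to the number of partitions of $n-6$ into parts different from $2,6,7$, which is at least $1$ (the all-ones partition) for every $n\ge 6$. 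Thus $\frac1{(q)_\infty}(A_2-A_3)$ has coefficient $\ge 1$ for all $n\ge 6$.

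Assembling these in $\frac1{(q)_\infty}(h(q)-2h(q^2))=\frac1{(q)_\infty}(A_1-A_4)+\frac2{(q)_\infty}(A_2-A_3)$: for odd $n$ the first term already has positive coefficient, and for even $n\ge 6$ the second term contributes at least $2$ while the first contributes $0$, so in all cases with $n\ge 6$ the coefficient of $q^n$ is positive. (The same bookkeeping shows positivity at $n=1,3,5$ and vanishing at $n=2,4$, which is exactly why the threshold is $n\ge 6$.) I expect the only genuine obstacle to be conceptual rather than computational: Theorem~\ref{main} carries a factor $(-q)_\infty$ in front, and one cannot simply delete it since $1/(-q)_\infty$ has negative power-series coefficients; the argument nonetheless goes through because the decomposition of $A_2-A_3$ is an identity of bare power series whose binomial factors $(1-q^{b_i})$ have pairwise distinct exponents, so they still cancel cleanly against $1/(q)_\infty$, and because the ``odd-part'' piece $\tfrac12\bigl((-q;q^2)_\infty-(q;q^2)_\infty\bigr)$, while it vanishes on even $n$, remains nonnegative throughout.
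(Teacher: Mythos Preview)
Your argument is correct and follows essentially the same route as the paper: both compute $\frac{1}{(q)_\infty}(A_1-A_4)=\tfrac12\bigl((-q;q^2)_\infty-(q;q^2)_\infty\bigr)$ to handle odd $n$, observe that the power-series decomposition of $A_2-A_3$ from the proof of Theorem~\ref{main} still yields nonnegativity after dividing by $(q)_\infty$ alone, and then single out the lowest-degree summand to force strict positivity for all $n\ge 6$. The only discrepancy is that you isolate $q^6(1-q^2)(1-q^6)(1-q^7)$ whereas the paper writes $q^6(1-q^2)(1-q^6)(1-q^8)$; your exponent $7$ is what actually arises from the $n=0$ instance of $(1-q^{4n+2})(1-q^{8n+7})(1-q^{12n+6})$, so this appears to be a typo in the paper rather than an error on your part.
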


\begin{proof}
From the proof of Theorem \ref{main} and by invoking the elementary identity $(-q)_\infty = 1/(q;q^2)_\infty$, we see that
\[
\frac{1}{(q)_\infty}(A_1-A_4) = \frac{1}{(-q)_\infty} \left(\frac{(-q;q^2)_\infty}{2(-q;q^2)_\infty}-\frac{1}{2}\right)
=\frac{1}{2}((-q;q^2)_\infty - (q;q^2)_\infty),
\]
which has positive power series coefficients for all odd positive powers of $q$ (the terms with even powers of $q$ vanishes). Again, from the proof of Theorem 1.1, it is easy to see that $\frac{1}{(q)_\infty}(A_2-A_3)$ has nonnegative power series coefficients.
Since one of the terms in the corresponding expression of $\frac{1}{(q)_\infty}(A_2-A_3)$ is
\[
\frac{1}{(q)_\infty}q^6(1-q^2)(1-q^6)(1-q^8) =q^6\prod_{\substack{k=1\\k\neq 2, 6, 8}^\infty}^\infty \frac{1}{1-q^k},
\]
the coefficients of $q^n$ for $n\geq 6$ in the power series expansion of
$\frac{1}{(q)_\infty}(A_2-A_3)$ are all positive.

\end{proof}


\section{A combinatorial interpretation}

In \cite{ack}, the first three authors defined a new counting function $\ospt(n)$ as

\begin{equation*}
\label{osptdefn}
\ospt(n) = M_{1}^{+}(n) - N^{+}_{1}(n)
\end{equation*}

\noindent and provided its combinatorial interpretation. The function $\ospt(n)$ is an interesting companion of $\spt(n)$ in sense of that

\begin{equation*}
\label{sptmoment}
\spt(n) = M_{2}^{+}(n) - N_{2}^{+}(n).
\end{equation*}

\noindent Here, $\spt(n)$ is the number of smallest parts in the partitions of $n$ \cite{an2}. In this section, we discuss an overpartition analogue of $\ospt(n)$ and its combinatorial meaning. Let us define

\[
\oospt(n) = \overline{M}_{1}^{+}(n) - \overline{N}_{1}^{+}(n).
\]

Before giving a combinatorial interpretation for $\oospt(n)$, we first recall the description of $\ospt(n)$. An even string in the partition $\la$ is a sequence of the consecutive parts starting from some even number $2k+2$ where the length is an odd number greater than or equal to $2k+1$ and $2k+2$ plus the length of the string (the number of consecutive parts) do not appear as a part. An odd string in $\la$ is a sequence of the consecutive parts starting from some odd number $2k+1$ where the length is greater than or equal to $2k+1$ such that the part $2k+1$ appears exactly once and $2k+2$ plus the length of the string does not appear as a part. By ``consecutive parts", we allow repeated parts. With these notions in mind, we have the following.

%

\begin{thm}\cite[Theorem 4]{ack} \label{osptcount}
For all positive integers $n$,
\[
\ospt (n) = \sum_{\la \vdash n} \ST (\la),
\]
where the sum runs over the partitions of $n$ and $\ST (\la)$ is the number of even and odd strings in the partition $\la$.
\end{thm}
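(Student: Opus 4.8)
The goal is to establish Theorem~\ref{osptcount}, namely that $\ospt(n) = \sum_{\la \vdash n} \ST(\la)$. The plan is to work entirely on the side of generating functions, building on the fact that by Proposition~\ref{r1m1gen} specialized to ordinary partitions (or rather the analogous identities from \cite{ack}), one has closed Lambert-series expressions for $\sum_n M_1^+(n)q^n$ and $\sum_n N_1^+(n)q^n$; their difference is $\frac{1}{(q)_\infty}$ times a theta-like sum. Concretely, the first step is to write down $\sum_{n\ge1}\ospt(n)q^n = \frac{1}{(q)_\infty}\bigl(h(q)-2h(q^2)\bigr)$ where $h$ is the function of Lemma~\ref{la} (this is the partition analogue of \eqref{rmh}, with $(-q)_\infty$ removed since overlined parts are absent). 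Then, invoking Lemma~\ref{l1}, we get a fully explicit expansion
\[
\sum_{n\ge1}\ospt(n)q^n = \frac{1}{(q)_\infty}\sum_{n=1}^\infty (-1)^{n+1}q^{n^2}\bigl(1 - 2q^n + 2q^{2n} - \cdots + (-1)^{n-1}2q^{n^2-n} + (-1)^n q^{n^2}\bigr).
\]

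Next I would interpret the factor $\frac{1}{(q)_\infty}$ as the generating function for partitions, so that a typical monomial in the product counts a partition $\mu$ together with a choice of one of the "correction" terms $(-1)^{n+1}(\pm 2) q^{n^2 + jn}$ (or the boundary terms $q^{n^2}$ and $q^{2n^2}$) for some $n\ge 1$, $0\le j\le n$. The heart of the argument is a sign-reversing involution: I would pair up partitions carrying a positive correction term with partitions carrying a negative one, in such a way that only those configurations survive which correspond bijectively to a partition $\la$ of $n$ equipped with a marked even or odd string in the sense defined just before the theorem. The bookkeeping is: a surviving term attaches the "staircase" $n, n, \dots$ (the consecutive parts forming the string, with the appropriate length dictated by which power $q^{n^2+jn}$ was chosen) onto the partition $\mu$ coming from $1/(q)_\infty$, and the parity of $n$ together with the multiplicity-one condition on the smallest part encodes exactly the even/odd string dichotomy; the non-appearance condition ("$2k+2$ plus the length does not appear as a part") is precisely what prevents the attached string from merging ambiguously with $\mu$, which is what makes the correspondence well-defined and injective.

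The main obstacle I expect is constructing the sign-reversing involution cleanly and proving it has exactly the right fixed-point set. The alternating signs in Lemma~\ref{l1}'s expansion mean there is massive cancellation, and one must show that after all cancellation the surviving monomials are in bijection with $\bigcup_{\la\vdash n}\{\text{strings in }\la\}$ — not merely that the coefficients agree numerically. In practice this is done by grouping the terms $g(n,j)$ (in the notation of the proof of Theorem~\ref{main}, $g(n,j)=(-1)^{n+j-1}q^{n^2+jn}$) into quadruples whose sum factors as $q^{a}(1-q^b)(1-q^c)$ or similar, and then reading each such factored block as: "choose a string of a given starting value and length, and forbid certain parts" — the $(1-q^b)$ factors being the exclusions. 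I would need to check the edge cases (the boundary terms $A_1$, $A_4$, and the leftover octuple of $g$-terms indexed by $4n+r$) separately, exactly as the proof of Theorem~\ref{main} isolates $A_1-A_4$ and the residual eight terms; these correspond to the shortest strings and to the "$1+q^{j^2}$" self-paired pieces in Lemma~\ref{la}. Once the combinatorial meaning of each block is pinned down, extracting the coefficient of $q^n$ on both sides and matching gives the theorem; alternatively, and perhaps more transparently, one argues directly that $\ST$ restricted to a single partition, summed over $n$ with the partition's parts as the "ambient" $\mu$, reproduces the bracketed factors term by term, so that summing over all $\mu$ yields $\frac{1}{(q)_\infty}\bigl(h(q)-2h(q^2)\bigr)$ on the nose.
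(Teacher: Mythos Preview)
This theorem is not proved in the present paper at all: it is quoted verbatim from \cite{ack} (note the ``\cite[Theorem 4]{ack}'' attribution), so there is no proof here to compare your proposal against.

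That said, your proposal contains a genuine error at the very first step. You assert that
\[
\sum_{n\ge1}\ospt(n)\,q^n \;=\; \frac{1}{(q)_\infty}\bigl(h(q)-2h(q^2)\bigr),
\]
claiming this is ``the partition analogue of \eqref{rmh}, with $(-q)_\infty$ removed since overlined parts are absent.'' This is false. The identity $\overline{R}_1(q)=\frac{2(-q)_\infty}{(q)_\infty}h(q^2)$ in Proposition~\ref{r1m1gen} relies on the Lambert series for the \emph{overpartition} rank, in which the exponent is $n^2+n$; the ordinary partition rank has Lambert series with exponent $n(3n+1)/2$, so the first positive rank moment for ordinary partitions is \emph{not} $\frac{2}{(q)_\infty}h(q^2)$. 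Stripping off $(-q)_\infty$ does not convert the overpartition identity into the partition one. Indeed, Corollary~\ref{2case} shows that $\frac{1}{(q)_\infty}(h(q)-2h(q^2))$ has positive coefficients only for $n\ge 6$, whereas $\ospt(n)>0$ for all $n\ge1$ by \eqref{oddrc}; so the two series cannot be equal.

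Consequently everything downstream --- invoking Lemma~\ref{l1}, the proposed sign-reversing involution on the $g(n,j)$ terms, and the matching with even/odd strings --- is built on the wrong generating function and does not establish the theorem. The actual proof in \cite{ack} starts from the correct difference $\frac{1}{(q)_\infty}\bigl(\sum(-1)^{n+1}q^{n(n+1)/2}/(1-q^n)-\sum(-1)^{n+1}q^{n(3n+1)/2}(1+q^n)/(1-q^n)\bigr)$, and the combinatorics of even and odd strings (in particular the length conditions ``$\ge 2k+1$'') are tailored to the pentagonal exponents appearing there, not to the squares and double-squares in $h(q)-2h(q^2)$.
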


The function $\oospt(n)$ now counts the number of certain strings in the overpartitions of $n$, but the difference is that we have a weighted count of strings. We start by defining $f_{k} (q)$ as
\[
f_{k} (q) = \sum_{n=1}^{\infty} (-1)^{n+1} q^{n(n+1)/2 + n (k-1)}.
\]
By Proposition \ref{r1m1gen} and exchanging the order of summation, we have
\[
\sum_{n=1}^{\infty} ( \overline{M}_{1}^{+}(n) - \overline{N}_{1}^{+}(n) ) q^n = \frac{(-q)_{\infty}}{(q)_{\infty}} \sum_{k=1}^{\infty} ( f_{k} (q) - 2 f_{k} (q^2) ).
\]
Note that for a fixed $k\geq 1$,
\begin{align*}
&\frac{(-q)_{\infty}}{(q)_{\infty}} (f_{2k-1} (q) + f_{2k} (q) - 2 f_{k} (q^2)) \\
&=\frac{(-q)_{\infty}}{(q)_{\infty}} \sum_{n=1}^{\infty} q^{2n^2-5n+4nk-2k+2}(1-q^{2n^2-n})(1-q^{4n+2k-2}) - q^{2n^2-3n+4nk} (1-q^{2n^2 +n}) (1-q^{4n+2k}).
\end{align*}
 Now we define $A_k(n)$ (resp. $B_k (n)$) to be the number of overpartitions of $n$ counted by the first (resp. second) sum.  By noting that
\[
2n^2 - 3n + 4nk  = 1 + (2k-2) + 2 + (2k-2) + \cdots + (2n-1) + (2k-2) + 2n + (2k-2),
\]
we define an odd string starting from $2k-1$ in an overpartition as
\begin{enumerate}
\item $2k-1, 2k, \ldots, 2\ell+2k-3$ appears at least once, i.e. there are $2\ell -1$ consecutive parts starting from $2k-1$.
\item There is no other part of size $2\ell^2 - \ell$ and $4\ell + 2k -2$.
\end{enumerate}
Similarly, we define an even string starting from $2k$ in an overpartition as
\begin{enumerate}
\item $2k-1, 2k, \ldots, 2\ell+2k-2$ appears at least once, i.e. there are $2\ell$ consecutive parts starting from $2k-1$.
\item There is no other part of size $2\ell^2 + \ell$ and $4\ell + 2k$.
\end{enumerate}

As with the $\ospt(n)$ function, $A_{k} (n)$ is now the number of odd strings starting from $2k-1$ along the overpartitions of $n$, and  $B_{k} (n)$ is the number of even strings starting from $2k-1$ along the overpartitions of $n$. Then we have
\[
\sum_{n=1}^{\infty} ( \overline{M}_{1}^{+}(n) - \overline{N}_{1}^{+}(n) ) q^n = \sum_{n=1}^{\infty}  \sum_{k=1}^{\lfloor(n+1)/2\rfloor} ( A_{k}(n) - B_{k} (n) ) q^n = \sum_{n=1}^{\infty} \oospt(n) q^n.
\]

We have thus proven the following.

\begin{thm} \label{combo}
For all positive integers $n$, we have
\[
\oospt(n) = \STo (n) - \STe (n),
\]
where $\STo(n)$ (resp. $\STe(n)$) is the number of odd (resp. even) strings along the overpartitions of $n$.
\end{thm}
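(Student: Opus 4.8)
The plan is to convert the generating-function computation displayed just above into a genuine proof; it has three ingredients, namely a Lambert-series rearrangement, one genuinely computational $q$-series identity valid for each fixed $k$, and a combinatorial reading of the factored summands, modelled on the partition case treated in \cite{ack}.

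First I would set up the reduction. Combining Proposition~\ref{r1m1gen}, equation~\eqref{rmh} and the definition $\oospt(n)=\overline{M}_{1}^{+}(n)-\overline{N}_{1}^{+}(n)$ gives $\sum_{n\ge 1}\oospt(n)q^{n}=\frac{(-q)_{\infty}}{(q)_{\infty}}\bigl(h(q)-2h(q^{2})\bigr)$. Expanding $\frac{1}{1-q^{n}}=\sum_{k\ge 1}q^{(k-1)n}$ yields $h(q)=\sum_{k\ge 1}f_{k}(q)$ and $h(q^{2})=\sum_{k\ge 1}f_{k}(q^{2})$, and interchanging the two summations is legitimate because each power of $q$ receives only finitely many contributions. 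Relabelling $\sum_{k\ge 1}f_{k}(q)=\sum_{k\ge 1}\bigl(f_{2k-1}(q)+f_{2k}(q)\bigr)$ then produces $\sum_{n\ge 1}\oospt(n)q^{n}=\frac{(-q)_{\infty}}{(q)_{\infty}}\sum_{k\ge 1}\bigl(f_{2k-1}(q)+f_{2k}(q)-2f_{k}(q^{2})\bigr)$.

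The heart of the proof is the per-$k$ identity
\[
f_{2k-1}(q)+f_{2k}(q)-2f_{k}(q^{2})=\sum_{n\ge 1}\Bigl( q^{2n^{2}-5n+4nk-2k+2}(1-q^{2n^{2}-n})(1-q^{4n+2k-2})-q^{2n^{2}-3n+4nk}(1-q^{2n^{2}+n})(1-q^{4n+2k}) \Bigr),
\]
which I would prove exactly in the style of Lemmas~\ref{la} and~\ref{l1}: expand the three series on the left in signed monomials, split each according to the parity of its summation index, expand every product $(1-q^{u})(1-q^{v})=1-q^{u}-q^{v}+q^{u+v}$ on the right, and, after suitable shifts of the summation variable, verify that the two sides agree monomial by monomial with telescoping cancellations. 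This parity-and-shift bookkeeping is where essentially all the work lies; it is routine but intricate, and a good consistency check is that summing the claimed identity over all $k\ge 1$ must recover the expansion of $h(q)-2h(q^{2})$ already obtained in Lemma~\ref{l1}.

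It then remains to read off the combinatorics. The elementary identities $2n^{2}-3n+4nk=\sum_{j=1}^{2n}(j+2k-2)$ and $2n^{2}-5n+4nk-2k+2=\sum_{j=1}^{2n-1}(j+2k-2)$ exhibit $q^{2n^{2}-3n+4nk}$ and $q^{2n^{2}-5n+4nk-2k+2}$ as the generating monomials of the partitions consisting of the $2n$, respectively $2n-1$, consecutive parts $2k-1,2k,\dots$ starting at $2k-1$; since $\frac{(-q)_{\infty}}{(q)_{\infty}}$ is the overpartition generating function, $\frac{(-q)_{\infty}}{(q)_{\infty}}q^{a}$ counts overpartitions into which such a block of non-overlined parts has been inserted, and multiplying by $(1-q^{b})(1-q^{c})$ imposes, via inclusion--exclusion, the two ``no other part of size $b$, resp.\ $c$'' conditions built into the definitions of odd and even strings --- this is the overpartition analogue of the argument proving \cite[Theorem~4]{ack}, and the one point demanding care is the case in which a forbidden size lands among the inserted parts, which is precisely why the conditions read ``no \emph{other} part''. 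Collecting the positive summands over $n$ identifies $\sum_{n}A_{k}(n)q^{n}$ with the generating function for the number of odd strings starting at $2k-1$ in the overpartitions of $n$, and similarly $\sum_{n}B_{k}(n)q^{n}$ with the even-string count; summing over $k$ (only $k\le \lfloor (n+1)/2\rfloor$ contributes to the coefficient of $q^{n}$) then gives $\oospt(n)=\sum_{k}A_{k}(n)-\sum_{k}B_{k}(n)=\STo(n)-\STe(n)$, which is the assertion of Theorem~\ref{combo}.
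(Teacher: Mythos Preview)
Your proposal is correct and follows essentially the same route as the paper: the paper's proof is precisely the discussion preceding the theorem statement (ending with ``We have thus proven the following''), and you have reconstructed its three ingredients---the Lambert expansion $h(q)=\sum_k f_k(q)$, the per-$k$ factorisation of $f_{2k-1}(q)+f_{2k}(q)-2f_k(q^2)$, and the combinatorial reading of $\frac{(-q)_\infty}{(q)_\infty}q^a(1-q^b)(1-q^c)$ via inclusion--exclusion---in the same order and with the same conclusions. The only difference is that you sketch how you would verify the per-$k$ identity, whereas the paper simply asserts it.
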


Let us illustrate the above discussion for $n=5$. From Table 1, we see that $\STo(5)=8$ and $\STe(5) = 4$, so $\oospt(5) =4$. This matches with $\overline{M}_{1}^{+}(5) = 24$ and $\overline{N}_{1}^{+}(5) = 20$.

\begin{table}[t]
\begin{center}
\begin{tabular}{c|c|c}
Overpartitions of $5$ & The number of odd strings & The number of even strings  \\ \hline
5& 1 & 0 \\
$\overline{4}$+1 & 1 & 0 \\
3+2 & 1 & 0 \\
3 +$\overline{2}$ & 1 & 0  \\
$\overline{3}$+$\overline{1}$ +1 & 1 & 0\\
3+$\overline{1}$+1 & 1 & 0 \\
2+2+1 & 1 & 1 \\
$\overline{2}$+2+1 & 1 & 1 \\
2+1+1+1 & 0 & 1 \\
2+$\overline{1}$+1+1 & 0 & 1 \\
\end{tabular}
\end{center}
\label{999}
\caption{The number of strings in the overpartitions of $5$.}
\end{table}


\section{Concluding Remarks}

We have numerically observed that

\begin{equation} \label{m2n2}
\overline{M}_{k}^{+}(n) > \overline{N}_{k}^{+}(n)
\end{equation}

\noindent for all $k$, $n \geq 1$. Inequality (\ref{m2n2}) and the fact that $\overline{N}_{2j}(n)=2\overline{N}_{2j}^{+}(n)$ and $\overline{M}_{2j}(n) = 2\overline{M}_{2j}^{+}(n)$ implies that a complete analogue of (\ref{rcforpart}) should hold, namely

\begin{equation} \label{evenm2n2}
\overline{M}_{2j}(n) > \overline{N}_{2j}(n)
\end{equation}

\noindent for all $j$, $n \geq 1$. Motivated by our present work, Jennings-Shaffer \cite{js} has proven (\ref{evenm2n2}) using the Bailey pair techniques from \cite{gar2}. See also \cite{gjs} for the case $k=1$. It would still be interesting to see if the techniques in \cite{ack} can be used to prove (\ref{m2n2}) (and thus (\ref{evenm2n2})) and discover a combinatorial meaning for $\overline{M}_{k}^{+}(n) - \overline{N}_{k}^{+}(n)$. Moreover, there is an inequality of note which has a similar flavor to (\ref{rcforpart}). If we consider the rank moment

\begin{equation*}
\overline{N2}_{k}(n) := \sum_{m \in \mathbb{Z}} m^{k} \overline{N2}(m,n)
\end{equation*}

\noindent where $\overline{N2}(m,n)$ is the number of overpartitions of $n$ with $M_2$-rank $m$ \cite{love2}, then Mao \cite{mao} has proven that

\begin{equation} \label{n2jn22j}
\overline{N}_{2j}(n) > \overline{N2}_{2j}(n)
\end{equation}

\noindent for all $j \geq 1$, $n \geq 2$. Another proof of (\ref{n2jn22j}) using the similarly defined positive rank moment $\overline{N2}_{k}^{+}(n)$ can be found in \cite{lrs}. It is still not known what $\overline{N}_{k}^{+}(n) - \overline{N2}_{k}^{+}(n)$ counts. While proving Corollary \ref{2case} and Theorem \ref{combo}, we observed the following. First, it appears that for all integers $m \geq 3$.

\begin{equation}\label{hqcon}
\frac{1}{(q)_{\infty}} (h(q) - m h(q^{m}))
\end{equation}

\noindent has positive power series coefficients for all positive powers of $q$. Second, numerical computations suggest that
\begin{equation}\label{akbk}
A_{k}(n) \geq B_{k}(n)
\end{equation}
for all $n$, $k \geq 1$. Finally, asymptotic methods reveal that the inequalities \eqref{m2n2} and \eqref{akbk}, and the positivity of the coefficients of \eqref{hqcon} are valid for large enough integers $n$ \cite{kks, rol}. However, it is still desirable to find $q$-theoretic or combinatorial proofs of these result, which shows that these conjectures are true for all positive integers.  We leave these questions to the interested reader.


\section*{Acknowledgements}
The authors thank the anonymous referee for the valuable comments. In particular, the current proof of Lemma \ref{la} is based on the referee's suggestion.

\end{document}